\numberwithin{equation}{subsection}
\newtheorem{theorem}{Theorem}[section]
\newtheorem*{theorem*}{Theorem}
\newtheorem{corollary}[theorem]{Corollary}
\newtheorem{lemma}[theorem]{Lemma}
\theoremstyle{definition}
\newtheorem{defn}[theorem]{Definition}
\def\alt#1{\mathrm{A}_{#1}}
\def\sym#1{\mathrm{S}_{#1}}
\thanks{The research of the first author is supported by the Australian Research Council grant
DP120100446. This work was done whilst the second author was
visiting The University of Western Australia as a Cheryl E. Praeger
Visiting Research Fellow. The authors would like to thank the
anonymous referee for their extremely helpful remarks.}
\begin{document}

\title[Generating alternating or symmetric groups]{A note on the probability of generating alternating or symmetric groups}

\author[Luke Morgan]{Luke Morgan}
\address{Luke Morgan,
Centre for the Mathematics of Symmetry and Computation, School of Mathematics and Statistics \\
University of Western Australia \\
35 Stirling Highway \\
Crawley, WA
6009, Australia}
\email{luke.morgan@uwa.edu.au}

\author[Colva M.~ Roney-Dougal]{Colva M. Roney-Dougal}
\address{Colva M. Roney-Dougal,
School of Mathematics and Statistics\\
  University of St Andrews \\
North Haugh \\
St Andrews \\
Fife KY16 9SS,
U.~K.}
\email{colva.roney-dougal@st-andrews.ac.uk}


\begin{abstract}
We improve on recent estimates for the probability of generating the
alternating and symmetric groups $\alt{n}$ and $\sym{n}$. In
particular we find the sharp lower bound, if the probability is given
by a quadratic in $n^{-1}$. This leads to improved bounds on the largest number
$h(\alt{n})$ such that a direct product of $h(\alt{n})$
 copies of $\alt{n}$ can be generated by two elements. 
\end{abstract}

\subjclass{20B30; 20P05}
\keywords{Symmetric group; alternating group; generation; probability}

\maketitle

\section{Introduction}

For a group $X = \sym{n}$ or $\alt{n}$, we write $p(X)$ for the probability that two elements of $X$ generate a group that contains $\alt{n}$. 
In \cite{Dixon69}, Dixon proved that $p(\sym{n}) \rightarrow 1$ as $n \rightarrow \infty$. In \cite{Dixon05} he sharpened this statement to 
$$p(\sym{n}) = 1 - \frac{1}{n} - \frac{1}{n^2} - \frac{4}{n^3} - \frac{23}{n^4} - \frac{171}{n^5} - \frac{1542}{n^6} + O(n^{-7}).$$
For many applications, numerical results are needed, rather than
asymptotics. In \cite{marotitamburini} Mar{\'o}ti and Tamburini proved
explicit upper and lower bounds $$1 - \frac{1}{n} - \frac{13}{n^2} <
p(X) \leqslant 1 - \frac{1}{n} + \frac{2}{3n^2}.$$
In this present note, we find the best possible lower bound of this
 type, and a close-to-optimal upper bound. 

\begin{theorem}
\label{main}
Let $X=\alt{n}$ or $X=\sym{n}$ with $n \geqslant 5$. Then
$$ 1-  \frac{1}{n} - \frac{8.8}{n^2}  \leqslant   p(X) < 1-\frac{1}{n} - \frac{0.93}{n^2}.$$ Equality holds in the lower bound if and only if $n = 6$. 
\end{theorem}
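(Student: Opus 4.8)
The plan is to work with the complementary probability $q(X) = 1 - p(X)$, the probability that a uniformly random pair $(\sigma,\tau)$ of elements of $X$ fails to generate a subgroup containing $\alt{n}$. Such a pair fails precisely when both elements lie in a common maximal subgroup $M$ of $X$ with $\alt{n}\not\leqslant M$, and every such $M$ is intransitive, transitive but imprimitive, or primitive. I would bound $q(X)$ from above to obtain the lower bound on $p(X)$, and from below to obtain the upper bound, treating $X=\sym{n}$ and $X=\alt{n}$ in parallel, since their point stabilisers and their imprimitive and primitive maximal subgroups match up class by class.

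For the lower bound on $p(X)$ I would use a union (Bonferroni) estimate: summing over conjugacy classes of maximal subgroups $M\not\supseteq\alt{n}$, the probability that both random elements lie in a common conjugate of $M$ is at most $[X:N_X(M)]\,(|M|/|X|)^2 = 1/[X:M]$ when $M$ is self-normalising. The intransitive classes are the set-stabilisers $(\sym{k}\times\sym{n-k})\cap X$ for $1\leqslant k<n/2$; the point stabilisers ($k=1$) contribute at most $1/n$, the $2$-set stabilisers about $2/n^2$, and the remaining terms $O(n^{-3})$, so the intransitive total is $1/n+O(n^{-2})$. The imprimitive classes $(\sym{a}\wr\sym{b})\cap X$ with $n=ab$ and $a,b>1$ have index growing exponentially in $n$ and contribute negligibly, while the primitive classes are controlled using CFSG-based bounds on the order and the number of primitive groups of each degree. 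Assembling these gives $q(X)\leqslant 1/n+c/n^2$ for a constant $c$; the value $c=8.8$, and the fact that equality holds exactly at $n=6$, I would then pin down by direct computation in the finitely many small and exceptional degrees, the case $n=6$ being extremal because $\alt{6}$ and $\sym{6}$ each possess a second class of index-$6$ subgroups (the transitive copies of $\alt{5}$, respectively $\sym{5}$, arising from the exceptional outer automorphism), roughly doubling the dominant contribution.

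For the upper bound on $p(X)$ it suffices to bound $q(X)$ from below by the probability $q_{\mathrm{int}}(X)$ that $\langle\sigma,\tau\rangle$ is merely intransitive, since an intransitive subgroup cannot contain $\alt{n}$. Writing $c_k$ for the number of pairs in $\sym{k}$ that generate a transitive group and decomposing according to the orbit of a fixed point, for $X=\sym{n}$ one has the identity
$$q_{\mathrm{int}}(X) = \sum_{k=1}^{n-1}\binom{n-1}{k-1}\frac{c_k\,((n-k)!)^2}{(n!)^2},$$
with an analogous expression for $\alt{n}$. Extracting the leading behaviour (the dominant terms coming from orbits of size $n-1$ and $n-2$, and from a single common fixed point) yields $q_{\mathrm{int}}(X)=1/n+1/n^2+O(n^{-3})$, so the $n^{-2}$-coefficient of $q(X)$ tends to $1$. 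As $0.93<1$, the inequality $q(X)>1/n+0.93/n^2$ holds with room for large $n$; I would make the error term in the expansion explicit, combine it with the exact values in the small degrees, and so verify the strict inequality for every $n\geqslant 5$, the constant $0.93$ being chosen close to the true infimum of $n^2q(X)-n$ over $n\geqslant 5$.

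The main obstacle is the uniform control of the primitive contribution in the upper estimate for $q(X)$: to keep the constant as small as $8.8$ one needs a bound on $\sum_{M\ \mathrm{primitive}}[X:N_X(M)]\,(|M|/|X|)^2$ that is $o(n^{-2})$ and valid for all $n$ simultaneously, which rests on sharp CFSG-based estimates for the orders and for the number of conjugacy classes of primitive groups of degree $n$, together with \emph{ad hoc} treatment of the sporadic degrees (such as those affording Mathieu groups). The remaining delicate points are making the intransitive expansion rigorous with two-sided error bounds, and carrying out the exact computations in the small degrees that establish optimality at $n=6$.
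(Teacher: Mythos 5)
Your strategy matches the paper's skeleton---split $q(X)=1-p(X)$ into intransitive and transitive contributions, show the intransitive part is $1/n+\Theta(n^{-2})$ from both sides, and settle $5\leqslant n\leqslant 13$ (hence the constant $8.8$ and the equality at $n=6$) by exact computation---but the two halves are executed quite differently. For the lower bound on $p(X)$ the paper likewise uses a union bound over the set-stabilisers $\sym{k}\times\sym{n-k}$, refined at $k=2,3$ by the factors $3/4$ and $13/18$ (the probability of transitivity on the small orbit, which avoids double-counting with the fixed-point case) to get $p_{\mathrm{intrans}}(X)<1/n+2.7/n^2$ for $n\geqslant 14$; but where you propose to re-derive CFSG-based estimates for primitive groups of degree $n$, the paper simply quotes the proofs of Lemmas 3.1 and 4.3 of Mar\'oti--Tamburini for $p_{\mathrm{trans}}(X)\leqslant 4.8/n^2$, so the hardest labour in your plan is already in the literature and need not be repeated. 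For the upper bound on $p(X)$ your mechanism is genuinely different: your convolution identity $q_{\mathrm{int}}(X)=\sum_{k=1}^{n-1}\binom{n-1}{k-1}c_k\left((n-k)!\right)^2/(n!)^2$ is correct for $\sym{n}$ (orbits of $\langle\sigma,\tau\rangle$ are invariant under each generator, and intransitivity is equivalent to the orbit of a fixed point being proper), and since all terms are nonnegative you can make it quantitative cheaply by truncating to $k\in\{1,n-2,n-1\}$ and lower-bounding $c_{n-1}$, $c_{n-2}$ by a crude union bound; the paper instead runs inclusion--exclusion to depth $2$ over point stabilisers and $2$-set stabilisers, obtaining the closed form $q(X)\geqslant 1/n+(8n^2-52n+75)/\bigl(8n(n-1)(n-2)(n-3)\bigr)>1/n+0.93/n^2$ for $n\geqslant 14$. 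Your identity is sharper in principle (it recovers Dixon's full asymptotic expansion), at the cost of more bookkeeping. Two details in your plan need care. First, the $\alt{n}$ analogue of your identity is not verbatim: for $n-k\geqslant 2$ the parity count gives the same probability $c_k\left((n-k)!\right)^2/(n!)^2$ because each restriction to the $k$-orbit extends to an even permutation in exactly $(n-k)!/2$ ways, but the $k=n-1$ term requires counting pairs of \emph{even} permutations generating a transitive group, which differs from $c_{n-1}$. Second, the equality statement at $n=6$ demands exact rational values, not three-decimal approximations: one needs $p(\alt{6})=p(\sym{6})=53/90=1-1/6-8.8/36$ exactly (the paper draws on the exact computations behind Table~\ref{tab:numbers}), so your ``direct computation in small degrees'' must be carried out at that precision, and likewise the strict inequality $p(X)\geqslant 1-1/n-8.8/n^2$ for $n\in\{5,7,\ldots,13\}$ should be checked against exact values rather than the rounded ones.
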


In fact, for $n \geqslant 14$, we prove that $1 - \frac{1}{n}
-\frac{7.5}{n^2} < p(X) <  1 - \frac{1}{n} - \frac{0.93}{n^2}$. The result for smaller $n$ comes from the values for $p(X)$  in Table~\ref{tab:numbers} (taken from \cite[Table 4.1]{ninathesis}).

Hall \cite{Hall} considered the largest number $h(S)$
such that a direct product of $h(S)$ copies of a non-abelian finite simple group $S$  
can be generated by two elements,
 and proved that   $h(S) =
p(S)|S|/|\mathrm{Out}(S)|$. 
The function $h(S)$ has received 
considerable attention recently; we refer the reader to \cite{marotitamburini} for more
discussion and references  and to \cite{nina} for lower bounds on
$h(S)$ for all non-abelian finite simple groups $S$.  The new bounds above yield:

\begin{corollary}
Let $n$ be an integer with $n\geqslant 14$. Then
$$ \left ( 1 - \frac{1}{n} - \frac{7.5}{n^2} \right ) \left
  (\frac{n!}{4} \right ) < h(\alt{n}) < \left ( 1 - \frac{1}{n}  - \frac{0.93}{n^2}\right ) \left ( \frac{n!}{4} \right ).$$
\end{corollary}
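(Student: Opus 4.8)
The plan is to derive the corollary directly from Hall's formula $h(S) = p(S)|S|/|\mathrm{Out}(S)|$, combined with the sharpened estimates for $p(X)$ recorded immediately after Theorem~\ref{main}. The only genuine content is to confirm that the quantity $p(\alt{n})$ appearing in Hall's formula coincides with the probability studied in this note, and then to substitute the correct values of $|\alt{n}|$ and $|\mathrm{Out}(\alt{n})|$.

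First I would observe that, for $X = \alt{n}$, two elements of $\alt{n}$ generate a subgroup containing $\alt{n}$ if and only if they generate $\alt{n}$ itself, since $\alt{n}$ is the ambient group. Hence the probability $p(\alt{n})$ defined in the introduction is precisely the probability that a random ordered pair of elements generates the simple group $\alt{n}$, which is the quantity $p(S)$ entering Hall's formula with $S = \alt{n}$. Next I would recall the standard facts that $|\alt{n}| = n!/2$ and that $|\mathrm{Out}(\alt{n})| = 2$ for all $n \geqslant 5$ with $n \neq 6$; in particular this holds for every $n \geqslant 14$. Substituting into $h(\alt{n}) = p(\alt{n})\,|\alt{n}|/|\mathrm{Out}(\alt{n})|$ then gives $h(\alt{n}) = p(\alt{n})\cdot(n!/2)/2 = p(\alt{n})\,n!/4$.

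Finally, for $n \geqslant 14$ the refined estimate stated after Theorem~\ref{main} asserts that $1 - \frac{1}{n} - \frac{7.5}{n^2} < p(\alt{n}) < 1 - \frac{1}{n} - \frac{0.93}{n^2}$. Multiplying these strict inequalities through by the positive factor $n!/4$ yields exactly the claimed bounds on $h(\alt{n})$.

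There is no serious obstacle here: all the difficulty is concentrated in the proof of Theorem~\ref{main}, and the corollary follows in a single line once Hall's formula and the order of $\mathrm{Out}(\alt{n})$ are in hand. The only point requiring a moment's care is the identification of the paper's $p(\alt{n})$ with the generation probability in Hall's sense, together with the exclusion of $n = 6$ (where $|\mathrm{Out}(\alt{6})| = 4$ would alter the constant); both are unproblematic under the hypothesis $n \geqslant 14$, which is in any case forced by the range in which the sharper bounds on $p(X)$ are available.
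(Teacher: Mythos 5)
Your proposal is correct and is essentially the paper's own (implicit) derivation: the corollary is stated as an immediate consequence of Hall's formula $h(S)=p(S)|S|/|\mathrm{Out}(S)|$, the values $|\alt{n}|=n!/2$ and $|\mathrm{Out}(\alt{n})|=2$ (valid since $n\geqslant 14$ excludes the exceptional case $n=6$), and the refined strict bounds $1-\frac{1}{n}-\frac{7.5}{n^2}<p(X)<1-\frac{1}{n}-\frac{0.93}{n^2}$ stated for $n\geqslant 14$ just after Theorem~\ref{main}. Your additional observations --- that for $X=\alt{n}$ the paper's $p(\alt{n})$ is exactly the generation probability entering Hall's formula, and that $|\mathrm{Out}(\alt{6})|=4$ is why $n=6$ must be avoided --- are accurate and handle the only points of care.
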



Let $m(S)$ denote the minimal index of a proper subgroup of a group $S$.
In \cite{liebeckshalev}, it is proved that there exist absolute
constants $c_1$ and $c_2$ such that $1 - c_1/m(S) < p(S) < 1  -
c_2/m(S)$, for all non-abelian finite simple groups $S$. 
For $i=1,2$, let $a_i$ denote the value of $c_i$ for the family of simple alternating groups.

\begin{corollary}
For $n \geqslant 5$, 
$$1 - \frac{2.468}{n} < p(\alt{n}) < 1 - \frac{1}{n},$$ and hence $a_1 \leqslant 2.468$ and $a_2 \geqslant 1$. 
\end{corollary}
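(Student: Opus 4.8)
The plan is to derive both inequalities directly from Theorem~\ref{main}, using the standard fact that $m(\alt{n}) = n$ for all $n \geqslant 5$: the point stabiliser $\alt{n-1}$ realises this index, and a proper subgroup of smaller index would give a nontrivial permutation representation of $\alt{n}$ of degree less than $n$, hence a proper nontrivial normal subgroup, contradicting simplicity. With $m(\alt{n}) = n$, the Liebeck--Shalev estimate becomes $1 - c_1/n < p(\alt{n}) < 1 - c_2/n$, so it suffices to exhibit $c_1 = 2.468$ and $c_2 = 1$ as admissible constants for the alternating family, which gives $a_1 \leqslant 2.468$ and $a_2 \geqslant 1$ respectively.

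For the upper bound I would simply observe that Theorem~\ref{main}, applied with $X = \alt{n}$, yields $p(\alt{n}) < 1 - \frac1n - \frac{0.93}{n^2} < 1 - \frac1n$ for every $n \geqslant 5$, since $0.93/n^2 > 0$. This is exactly $p(\alt{n}) < 1 - c_2/m(\alt{n})$ with $c_2 = 1$, so $a_2 \geqslant 1$.

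For the lower bound I would split into a generic range and a finite check. For $n \geqslant 6$, the lower bound of Theorem~\ref{main} gives $p(\alt{n}) \geqslant 1 - \frac1n - \frac{8.8}{n^2}$, and comparing this with $1 - 2.468/n$ reduces, after clearing the positive denominator $n^2$, to the linear inequality $1.468\,n > 8.8$. Since $8.8/1.468 < 6$, this holds for all $n \geqslant 6$, and hence $1 - 2.468/n < p(\alt{n})$ throughout that range. The remaining case $n = 5$ cannot be handled this way, because there the quadratic bound gives only $1 - \frac15 - \frac{8.8}{25} = 0.448$, which falls below the target $1 - 2.468/5 = 0.5064$; instead I would invoke the exact value $p(\alt{5}) = 19/30 \approx 0.633$ from Table~\ref{tab:numbers}, which comfortably exceeds $0.5064$. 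Together these two ranges establish $1 - 2.468/n < p(\alt{n})$ for all $n \geqslant 5$, so $c_1 = 2.468$ is admissible and $a_1 \leqslant 2.468$.

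The main obstacle is not conceptual but the tightness of the constant near $n = 6$. Because $\sup_n n\bigl(1 - p(\alt{n})\bigr)$ is attained close to $n = 6$ and sits only marginally below $2.468$, the inequality must be verified carefully there: at $n = 6$ the margin is $1.468 \cdot 6 - 8.8 = 0.008 > 0$, positive but small, so the strictness genuinely depends on the precise constant $8.8$ furnished by Theorem~\ref{main}. The only other care needed is the separate treatment of $n = 5$, where the quadratic estimate is too weak and the tabulated value must be used.
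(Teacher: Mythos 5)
Your proposal is correct and follows essentially the same route the paper intends: the upper bound and the lower bound for $n \geqslant 6$ drop straight out of Theorem~\ref{main} (with the genuinely tight margin $1.468\cdot 6 - 8.8 = 0.008 > 0$ at $n=6$, where the theorem's lower bound is an equality), and the case $n=5$, where the quadratic bound $0.448$ falls short of $1 - 2.468/5 = 0.5064$, is settled by the value $p(\alt{5}) = 0.633$ from Table~\ref{tab:numbers}, exactly as the paper's data is meant to be used. One pedantic caveat: in your justification that $m(\alt{n}) = n$, a \emph{faithful} permutation representation of degree $k < n$ does not yield a nontrivial normal subgroup and must instead be excluded on order grounds ($n!/2 > k!$ for $k \leqslant n-1$), though this standard background fact is assumed without proof in the paper as well.
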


\section{Proof of Theorem~\ref{main}}

\begin{defn}
For $X=\alt{n}$ or $\sym{n}$ we let
$p_{{\mbox{\footnotesize{intrans}}}}(X)$ and
$p_{{\mbox{\footnotesize{trans}}}}(X)$  be the probability that two
elements chosen randomly from $X$ generate a subgroup of an intransitive maximal subgroup of
$X$, or a subgroup of a transitive maximal subgroup of $X$
 other than $\alt{n}$, respectively.
\end{defn}


\begin{lemma}
\label{lower}
Let $X=\alt{n}$ or $\sym{n}$ with $n \geqslant 14$. Then
$$p_{\footnotesize{\mbox{intrans}}} (X)  < \frac{1}{n} +\frac{2.7}{n^2}. $$
\end{lemma}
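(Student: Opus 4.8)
The plan is to bound $p_{\footnotesize{\mbox{intrans}}}(X)$ by the probability that the two chosen elements share an invariant subset. Since the intransitive maximal subgroups of $X$ are precisely the set stabilisers $(\sym{k}\times\sym{n-k})\cap X$ with $1\le k<n/2$, a pair $(g,h)$ contributes to $p_{\footnotesize{\mbox{intrans}}}(X)$ only if $g$ and $h$ have a common invariant $k$-set for some $k$ in this range (note $k=n/2$ is excluded, as the setwise stabiliser of an $(n/2)$-set is contained in the imprimitive group $\sym{n/2}\wr\sym{2}$). Writing $B_k$ for the event that a random pair has a common invariant $k$-set, subadditivity gives $p_{\footnotesize{\mbox{intrans}}}(X)\le P(B_1)+\sum_{k=2}^{\lfloor (n-1)/2\rfloor}P(B_k)$. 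The first thing to verify is that for a fixed $k$-set $\Delta$ the probability that a random element of $X$ stabilises $\Delta$ is $\binom{n}{k}^{-1}$, uniformly for $X=\alt{n}$ and $X=\sym{n}$: when $2\le k<n/2$ exactly half of the $k!(n-k)!$ setwise stabilisers in $\sym{n}$ are even, and when $k=1$ the stabiliser in $\alt{n}$ is $\alt{n-1}$, so in each case the probability is $\binom{n}{k}^{-1}$, and both elements stabilise a given $\Delta$ with probability $\binom{n}{k}^{-2}$.

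For $k\ge 2$ I would apply the union bound over the $\binom{n}{k}$ sets of size $k$, giving $P(B_k)\le\binom{n}{k}^{-1}$. The term $k=2$ contributes $\binom{n}{2}^{-1}=\tfrac{2}{n(n-1)}$, and since $\binom{n}{k}^{-1}$ is decreasing for $k<n/2$ the tail $\sum_{k\ge 3}\binom{n}{k}^{-1}$ is $O(n^{-3})$ and can be bounded crudely (each term for $k\ge 4$ is at most $\binom{n}{4}^{-1}=O(n^{-4})$, and there are fewer than $n/2$ of them). Hence the whole $k\ge 2$ contribution is $\tfrac{2}{n^2}+O(n^{-3})$.

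The delicate term is $P(B_1)$, the probability of a common fixed point. The naive union bound $P(B_1)\le n\cdot n^{-2}=\tfrac1n$ is too weak: combined with the $\tfrac{2}{n^2}$ above it already exceeds $\tfrac1n+\tfrac{2.7}{n^2}$ at $n=14$. Instead I would run inclusion–exclusion over the fixed points, where the probability that both elements fix $j$ prescribed points is $\left(\tfrac{(n-j)!}{n!}\right)^2$ (again identical for $\alt{n}$ and $\sym{n}$ once $n-j\ge 2$). The third Bonferroni inequality then gives the valid upper bound
\[
P(B_1)\le \frac{1}{n}-\frac{1}{2n(n-1)}+\frac{1}{6n(n-1)(n-2)};
\]
the negative second term is exactly what the pure union bound discards, and the positive third term is needed only to ensure the truncation over-estimates rather than under-estimates.

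Assembling these gives $p_{\footnotesize{\mbox{intrans}}}(X)\le \tfrac1n+\tfrac{3}{2}n^{-2}+O(n^{-3})$, so the asymptotically correct constant is $\tfrac32$ and the stated $2.7$ is simply the uniform value needed to absorb the positive lower-order terms. The main obstacle is therefore entirely quantitative: confirming that the combined bound stays below $\tfrac1n+\tfrac{2.7}{n^2}$ for \emph{every} $n\ge 14$. The inequality is tightest at $n=14$, where the assembled upper bound is about $0.0843$ against a right-hand side of about $0.0852$; checking this binding case, and that the gap only widens as $n$ grows (since the true coefficient $\tfrac32$ lies well below $2.7$), completes the proof and explains the hypothesis $n\ge 14$.
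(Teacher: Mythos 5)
Your skeleton is the same as the paper's: reduce to a union bound over invariant $k$-sets with $1\leqslant k\leqslant\lfloor (n-1)/2\rfloor$ (correctly discarding $k=n/2$ via the wreath product), use that a random element of either $\alt{n}$ or $\sym{n}$ stabilises a fixed $k$-set with probability $\binom{n}{k}^{-1}$, and absorb the tail $k\geqslant 4$ crudely. Where you genuinely diverge is in how the decisive $-\tfrac{1}{2}$ at order $n^{-2}$ is earned. The paper keeps the plain bound $\tfrac1n$ for the fixed-point term but insists that $Y$ be \emph{transitive} on a minimal invariant set of size $2$ or $3$ (otherwise a smaller invariant set exists, already counted), which buys conditional factors $\tfrac34$ and $\tfrac{13}{18}$, so its $k=2$ term is $\tfrac{3}{2n(n-1)}$. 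You instead keep the full $\tfrac{2}{n(n-1)}$ at $k=2$ and recover $-\tfrac{1}{2n(n-1)}$ by a depth-$3$ Bonferroni bound on the common-fixed-point event; that inequality is correct as stated, including for $\alt{n}$ since $n-j\geqslant 2$ for $j\leqslant 3$. Both mechanisms give leading coefficient $\tfrac32$; yours is larger at order $n^{-3}$ (coefficient $6+\tfrac16$ against the paper's $\tfrac{13}{3}$ from the $13/18$ factor), which is why your version has noticeably less slack at $n=14$ and makes the tail estimate critical.

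And that is where there is a genuine quantitative gap: your stated tail bound does not suffice. Taking ``fewer than $n/2$ terms, each at most $\binom{n}{4}^{-1}$'' gives a tail of at most $\tfrac{12}{(n-1)(n-2)(n-3)}$, and then $n^2$ times your assembled bound minus $\tfrac1n$ equals
$$\frac{3n}{2(n-1)}+\frac{37n}{6(n-1)(n-2)}+\frac{12n^2}{(n-1)(n-2)(n-3)},$$
which at $n=14$ is about $3.54>2.7$ and stays above $2.7$ for all $14\leqslant n\leqslant 20$ (even the sharper count of $\tfrac{n-7}{2}$ terms, as in the paper, still fails for $n=14,15,16$ in your version). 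Your quoted $0.0843$ at $n=14$ is obtained from the \emph{exact} tail $\sum_{k=4}^{6}\binom{14}{k}^{-1}=\tfrac{11}{6006}$, not from the bound you wrote down, so as written the binding case is unproved. The fix is cheap: split off $k=4,5$ and bound the rest by at most $\tfrac{n-11}{2}$ terms each at most $\binom{n}{6}^{-1}$, giving a scaled total of at most
$$\frac{3n}{2(n-1)}+\frac{37n}{6(n-1)(n-2)}+\frac{24n}{(n-1)(n-2)(n-3)}+\frac{120n}{(n-1)(n-2)(n-3)(n-4)}+\frac{180n(n-11)}{(n-1)(n-2)(n-3)(n-4)(n-5)},$$
which at $n=14$ is about $2.56<2.7$; since each summand is a decreasing function of $n$ for $n\geqslant 14$, this also supplies the monotonicity (``the gap only widens'') that you asserted but did not verify. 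With those two repairs your argument is a valid, genuinely different route to the lemma; note though that it cannot be pushed below the constant $\tfrac32+\varepsilon$ any more than the paper's can, since both methods double-count at exactly the same place.
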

\begin{proof}
We prove the result for $\sym{n}$, the arguments for $\alt{n}$ are identical. Let $x,y \in \sym{n}$ and suppose 
that $Y:=\langle x, y \rangle$ is contained in an intransitive maximal subgroup. Then $Y$ is contained in a subgroup conjugate to $\sym{k} \times \sym{n-k}$ for some $1 \leqslant k \leqslant  \lfloor \frac{n-1}{2} \rfloor$.

Let $k \in \{1, \ldots, n-1\}$. Then 
the probability that $Y \leqslant \sym{k} \times \sym{n-k}$ is bounded by
$$\binom{n}{k}\left( \frac{k!(n-k)!}{n!}\right)^2 = \binom{n}{k}^{-1}.$$
So the probability that $Y \leqslant \sym{1} \times \sym{n-1}$ is at most $\frac{1}{n}$, and 
the probability that $Y \leqslant \sym{2} \times \sym{n-2}$ and $Y$  is transitive on the orbit of size $2$ is bounded by 
$$\frac{3}{4} \frac{2}{n(n-1)}= \frac{3}{2n(n-1)}.$$ 
Similarly, the probability that $Y \leqslant \sym{3} \times \sym{n-3}$ and
$Y$ is transitive on the orbit of length 3 is
$$\frac{13}{18} \binom{n}{3}^{-1} = \frac{13}{3n(n-1)(n-2)}.$$
Now the probability that $Y \leqslant \sym{k} \times \sym{n-k}$ for some $4 \leqslant k \leqslant \lfloor \frac{n-1}{2} \rfloor$ is
$$\sum_{k=4}^{\lfloor \frac{n-1}{2} \rfloor} \frac{1}{\binom{n}{k}} 
\leqslant  \sum_{k=4}^{\lfloor \frac{n-1}{2} \rfloor} \frac{1}{\binom{n}{4}} 
\leqslant \frac{12(n-7)}{n(n-1)(n-2)(n-3)}.$$
We now observe that, since $n \geqslant 14$, 
$$\frac{3}{2n(n-1)} +\frac{13}{3n(n-1)(n-2)} + \frac{12(n-7)}{n(n-1)(n-2)(n-3)} < \frac{2.7}{n^2}$$
which completes the proof.
\end{proof}

\begin{lemma}
\label{upper}
Let $X = \alt{n}$ or $\sym{n}$, with $n \geqslant 14$. Then 
$$p_{\mbox{\footnotesize{intrans}}}(X) > \frac{1}{n} + \frac{0.93}{n^2} .$$
\end{lemma}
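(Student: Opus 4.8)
The plan is to obtain the lower bound from only two sources of intransitivity: a common fixed point, and an orbit of size $2$. As in Lemma~\ref{lower} I would treat $\sym{n}$, the argument for $\alt{n}$ being identical, since a random element of either group fixes a prescribed point with probability $1/n$ and preserves a prescribed $2$-set with probability $2/(n(n-1))$, acting non-trivially on it in a proportion $3/4$ of these cases. Let $x,y$ be random and $Y=\langle x,y\rangle$. Write $U$ for the event that $Y$ has a common fixed point and $V$ for the event that some $2$-subset is an orbit of $Y$. Each event forces $Y$ into an intransitive maximal subgroup (a conjugate of $\sym{1}\times\sym{n-1}$ or of $\sym{2}\times\sym{n-2}$ respectively), so
$p_{\mbox{\footnotesize{intrans}}}(X)\geqslant P(U\cup V)=P(U)+P(V)-P(U\cap V)$, and it suffices to bound $P(U)$ and $P(V)$ below and $P(U\cap V)$ above.

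For $U$, let $A_i$ be the event that $x$ and $y$ both fix the point $i$, so $P(A_i)=1/n^2$ and $P(A_i\cap A_j)=1/(n(n-1))^2$ for $i\neq j$. Bonferroni's inequality gives $P(U)\geqslant\sum_i P(A_i)-\sum_{i<j}P(A_i\cap A_j)=\frac{1}{n}-\frac{1}{2n(n-1)}$. For $V$, let $B_{ij}$ be the event that $\{i,j\}$ is a $2$-orbit of $Y$; a short count gives $P(B_{ij})=3/(n(n-1))^2$, while $B_{ij}$ and $B_{kl}$ are incompatible unless $\{i,j\}$ and $\{k,l\}$ are disjoint, in which case $P(B_{ij}\cap B_{kl})=9/(n(n-1)(n-2)(n-3))^2$. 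Summing over the $\tfrac18 n(n-1)(n-2)(n-3)$ disjoint pairs of pairs and applying Bonferroni again yields $P(V)\geqslant\frac{3}{2n(n-1)}-\frac{9}{8n(n-1)(n-2)(n-3)}$.

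The overlap term I would bound crudely from above: summing $P(A_i\cap B_{jk})=3/(n(n-1)(n-2))^2$ over the $n\binom{n-1}{2}$ relevant triples gives $P(U\cap V)\leqslant\frac{3}{2n(n-1)(n-2)}$. Substituting the three estimates, the two contributions of order $n^{-2}$ combine as $-\frac{1}{2n(n-1)}+\frac{3}{2n(n-1)}=\frac{1}{n(n-1)}$, which already exceeds $1/n^2$; every remaining correction is $O(n^{-3})$, so the whole statement reduces to a single explicit rational inequality in $n$.

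The main obstacle is that the bound is genuinely tight: at $n=14$ the quantity $P(U)+P(V)-P(U\cap V)$ exceeds $\frac{1}{n}+\frac{0.93}{n^2}$ only by a margin of order $10^{-5}$. Consequently the second-order terms cannot be discarded freely; in particular the disjoint-pair correction to $P(V)$ and the bound on $P(U\cap V)$, both of order $n^{-3}$ or smaller, must be retained and estimated honestly, and the resulting rational inequality must then be verified for every $n\geqslant 14$, its failure for smaller $n$ being precisely why the lemma is stated only from $n=14$ onwards. This delicate endgame, rather than any of the probability computations, is where the real work lies.
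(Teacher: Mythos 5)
Your proposal is correct and takes essentially the same route as the paper: the paper also lower-bounds $p_{\mbox{\footnotesize{intrans}}}(X)$ by the probability that $\langle x,y\rangle$ has a fixed point or an orbit of size $2$, applying inclusion--exclusion to depth $2$ on the point stabilizers and the setwise-minus-pointwise stabilizers of $2$-sets, and your Bonferroni-plus-union-bound bookkeeping reproduces term-for-term the same five quantities, summing to the paper's bound $\frac{1}{n}+\frac{8n^2-52n+75}{8n(n-1)(n-2)(n-3)}$, which exceeds $\frac{1}{n}+\frac{0.93}{n^2}$ exactly for $n\geqslant 14$. Your closing observation is also accurate: the final rational inequality is tight (it fails at $n=13$ and holds at $n=14$ with margin of order $10^{-5}$), which is precisely why the lemma is stated only for $n\geqslant 14$.
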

\begin{proof}
We observe that $p_\text{intrans}(X)$ is bounded below by the probability that a random  pair of elements of $X$ generate a subgroup with a fixed point, or with an orbit of size $2$. For $X = \sym{n}$, we bound $p_\text{intrans}(X)$ by doing inclusion-exclusion to depth $2$ on the union of the sets $(\sym{n})_
\alpha$, with $1 \leqslant \alpha \leqslant n$, and $(\sym{n})_{\{\alpha, \beta\}} \setminus (\sym{n})_{(\alpha, \beta)}$, with $1 \leqslant \alpha < \beta \leqslant n$. 
We find that $p_\text{intrans}(X)$ is greater than
$$
\begin{array}{c}
\frac{1}{n}   
+ \frac{3}{4} \frac{2(n-2)!}{n!}
- \frac{(n-2)!}{2n!} 
- \frac{3}{4} \binom{n}{1}\binom{n-1}{2}\left(\frac{2(n-3)!}{n!}\right)^2 
- \left( \frac{3}{4} \right)^2 \frac{\binom{n}{2}\binom{n-2}{2}}{2}\left(\frac{4(n-4)!}{n!}\right)^2 
\end{array}
$$
Thus
$$p_\text{intrans}(X) \geqslant  \frac{1}{n}  + \frac{8n^2 - 52 n + 75}{8n(n-1)(n-2)(n-3)}$$
which, since $n\geqslant 14$, is  greater than $\frac{1}{n} + \frac{0.93}{n^2}$.
\end{proof}


\begin{proof}[Proof of Theorem~\ref{main}]
For the upper bound we use Lemma~\ref{upper}. For the lower bound, note that 
$$1 - p(X) =  p_\text{intrans}(X) + p_\text{trans}(X).$$ It follows
from the proofs of
\cite[Lemmas 3.1 and 4.3]{marotitamburini}
that $p_\text{trans}(X) \leqslant \frac{4.8}{n^2}$. Combining this with Lemma~\ref{lower} gives  the theorem.
\end{proof}

In Table~\ref{tab:numbers} we record the value of $p(\alt{n})$ and $p(\sym{n})$
for $n \leqslant 13$, together with our lower and upper bounds as
stated in Theorem~\ref{main}. All values are correct to three decimal places.

\begin{table}\caption{Precise values and bounds for $p(X)$
} \label{tab:numbers}
{\small{$$\begin{array}{c||c|c|c|c|c|c|c|c|c}
n & 5 & 6 & 7 & 8 & 9 & 10 & 11 & 12 & 13 \\
\hline
p(\alt{n}) = & 0.633 & 0.588 & 0.726 & 0.739 & 0.848 & 0.875 & 0.893 & 0.902 & 0.913 \\
p(\mathrm S_n) =  & 0.633 & 0.588 & 0.795 & 0.796 & 0.859 & 0.875 & 0.894 & 0.903 & 0.913 \\
p(X) \geqslant & 0.448 & 0.588 & 0.677 & 0.737 & 0.780 & 0.812 & 0.836 & 0.855 & 0.871\\
p(X) \leqslant & 0.763 & 0.808 & 0.839 & 0.861 &  0.878 & 0.891 & 0.902 & 0.911 & 0.918 \\
\end{array}
$$}}
\end{table}

\end{document}